\def\ao{}
\def\aa{}
\newtheorem{theorem}{\indent {Theorem}}
\newtheorem{lemma}{\indent {Lemma}}
\newcommand{\qed}{\ \ {\bf q.e.d.}}
\def\0{{\bf 0}}
\def\1{{\bf 1}}
\def\R{\mathbb{R}}
\begin{document}

\title{A lower bound for distributed averaging algorithms}
\author{Alex Olshevsky, John N. Tsitsiklis\thanks{The authors are with
the Laboratory for Information and Decision Systems, Massachusetts
Institute of Technology, Cambridge, MA, USA. Email:
{\tt $\{$alex\_o,jnt$\}$@mit.edu}. This research was supported by the
NSF under grants ECCS-0701623 and CMMI-0856063.}}

\maketitle
\begin{abstract} \ao{ We derive lower bounds on the convergence
speed of a widely used class of distributed averaging algorithms. In particular, we prove that any distributed averaging algorithm whose state consists of a single real number and whose (possibly nonlinear) update function satisfies a natural smoothness condition has a worst case running time of at least on the order of $n^2$ on a network of $n$ nodes. Our results suggest that increased memory or expansion of the state space is crucial for improving the running times of distributed averaging algorithms. }
\end{abstract}
\section{Introduction}

The goal of this paper is to analyze the fundamental limitations of
a class of distributed averaging algorithms. These algorithms are
message-passing rules
 for a collection of agents (which may be sensors, nodes of a communication network, or UAVs), each beginning with a real number, to estimate the average of these
numbers using only nearest neighbor communications.  Such algorithms
are interesting because a number of sophisticated network
coordination tasks can be reduced to averaging (see
\cite{LR04, XBL05, AH08, BCH08, CCSZ08, GCB08, HU08, SSR08, WSD09}), and also because they can be
designed to be robust to frequent failures of communication links.

 A variety of such algorithms are available (see \cite{TBA86, JLM03, OM04,XB04, M05, MVR06, NOOT07, KBS07, ZM08,  OT09}).  However,  many of these algorithms tend to suffer from a common disadvantage: even when no link failures occur,  their convergence times
 do not scale well in the number of agents. Our aim in this paper is to show that this is, in fact,
 unavoidable for a common class of such algorithms; namely, that any distributed averaging algorithm
 that uses a single scalar state variable at each agent and satisfies a natural ``smoothness'' condition will have
 this property.

 We next proceed to define distributed averaging algorithms and
informally state our result. \subsection{\ao{Background and basic
definitions.}} \noindent {\bf Definition of local averaging
algorithms:} Agents $1,\ldots,n$
 begin with real numbers $x_1(0),\ldots,x_n(0)$ stored in memory. At each round $t=0,1,2,\ldots$, agent $i$ broadcasts $x_i(t)$ to each of
its neighbors in some {\em undirected} graph $G(t)=(\{1,\ldots,n\}, E(t))$, and then sets $x_i(t+1)$ to be some
  function of $x_i(t)$ and of the values $x_{i'}(t), x_{i''}(t), \ldots $ it has just received from its own neighbors:
 \begin{equation} \label{onehopupdate} x_i(t+1) = f_{i, G(t)} ( x_i(t), x_{i'}(t), x_{i''}(t), \ldots ). \end{equation} We require each $f_{i,G(t)}$ to be a differentiable function. Each
 agent uses the incoming messages $x_{i'}(t), x_{i''}(t),\ldots$ as the arguments of $f_{i,G(t)}$ in some arbitrary order; we
 assume that this order does not change, i.e. if $G(t_1)=G(t_2)$, then the message coming from the same neighbor of agent $i$
  is mapped to the same argument of $f_{i,G(t)}$ for $t=t_1$ and $t=t_2$.  It is desired that
 \begin{equation} \label{convergence} \lim_{t \rightarrow \infty} x_i(t) = \frac{1}{n} \sum_{i=1}^n x_i(0),\end{equation} for every $i$,  for every sequence of graphs $G(t)$ having the property that
 \begin{equation} \label{connectivity}  \mbox{ the graph } (\{1,\ldots,n\}, \cup_{s \geq t} E(s)) \mbox{  is connected for every } t, \end{equation} and for every possible way for the agents to map incoming messages to arguments of $f_{i,G(t)}$.

  In words, as the number of rounds $t$ approaches infinity, iteration (\ref{onehopupdate}) must converge to the average of the numbers $x_1(0), \ldots,x_n(0)$. Note that the agents have no control over the communication graph sequence $G(t)$, which is exogenously provided by ``nature." However, as we stated previously, every element of the sequence $G(t)$ must be undirected: this corresponds to bidirectional models of communication between agents. Moreover, the sequence $G(t)$ must satisfy the mild connectivity condition of Eq. (\ref{connectivity}), which says that the network cannot become
 disconnected after a finite period.

 Local averaging algorithms are useful tools for information fusion due to their efficient utilization of
resources (each agent stores only a single number in memory) as well
as their robustness properties (the sequence of graphs $G(t)$ is
time-varying, and it only needs to satisfy the relatively weak
connectivity condition in Eq. (\ref{connectivity}) for the
convergence in Eq. (\ref{convergence}) to hold). As far as the
authors are aware, no other class of schemes for averaging (e.g.,
flooding, fusion along a spanning tree) is known to produce similar
results under the same assumptions.

\bigskip

\noindent {\bf Remark:}  As can be seen from the
subscripts, the update function $f_{i,G(t)}$ is allowed to depend on
the agent and on the graph. Some dependence on the graph is
unavoidable since in different graphs an agent may have a different
number of neighbors, in which case nodes will receive a different
number of messages, so that even the number of arguments of
$f_{i,G(t)}$ will depend on $G(t)$. It is often practically desired
that $f_{i,G(t)}$ depend only weakly on the graph, as the entire
graph may be unknown to agent $i$. For example, we might require
that $f_{i,G(t)}$ be completely determined by the degree of $i$ in
$G(t)$. However, since our focus is on what distributed algorithms
{\em cannot} do, it does not hurt to assume the agents have
unrealistically rich information; thus we will not assume any
restrictions on how $f_{i,G(t)}$ depends on $G(t)$.

\bigskip

\noindent {\bf Remark:}  We require the functions
$f_{i,G(t)}$ to be smooth, for the following reason. First, we
need to exclude unnatural algorithms that encode vector information in the
infinitely many bits of a single real number. Second, although we
make the convenient technical assumption that agents can transmit
and store real numbers, we must be aware that in practice agents
will transmit and store a quantized version of $x_i(t)$. Thus, we are mostly interested
in algorithms that are not disrupted much by quantization. For this reason,
 we must prohibit the agents from using {\em discontinuous}
update functions $f_{i, G(t)}$. For technical reasons, we actually
go a little further, and prohibit the agents from using {\em
non-smooth} update functions $f_{i,G(t)}$.

 \bigskip

 %Consider, by contrast, computing averages by "flooding":  each agent possesses a unique id, which it broadcasts to its %neighbors along with its $x_i(0)$, and nodes  forward each other's messages until every agent learns $x_i(0)$ for all %$i=1,\ldots,n$. In contrast to distributed averaging algorithms which only require each node to keep track of a single %number, this requires each node to eventually
% store at least $n$ numbers.
%
%  Alternatively,
% nodes may attempt to lower storage requirements by utilizing a spanning tree - for example, each node may send to its parent a value consisting of the averages in its branch of the tree, and these numbers may be recursively updated as they are forwarded towards the root, where the correct answer is computed and forwarded back. However, schemes of this type are sensitive to edge failures - too many may make the spanning tree impossible to maintain, while distributed averaging works with quite weak assumptions on the
% graph sequence $G(t)$, as we will shortly see.

\subsection{ \ao{Examples.}}
In order to provide some context, let us mention just a few of the
distributed averaging schemes that have been proposed in the
literature:

\begin{enumerate} \item The max-degree method \cite{OM04} involves picking $\epsilon(t)$ with the property $\epsilon(t) \leq 1/(d(t)+1)$,  where $d(t)$ is the largest degree of any agent in $G(t)$, and updating by
\[ x_i(t+1) = x_i(t) + \epsilon(t) \sum_{i \in N_i(t)} \left( x_j(t) - x_i(t) \right) .\] Here we use $N_i(t)$ to denote the set of neighbors of agent $i$ in $G(t)$. In practice, a satisfactory $\epsilon(t)$ may not be known to all of
the agents, because this requires some global information. However,
in some cases a satisfactory choice for $\epsilon(t)$ may be
available, for example when an a priori upper bound on $d(G(t))$ is
known.
\item The Metropolis method \cite{XB04} involves setting $\epsilon_{ij}(t)$ to satisfy $\epsilon_{ij}(t) \leq \min(1/(d_i(t)+1), 1/(d_j(t)+1))$, where $d_i(t), d_j(t)$ are the degrees of agents $i$ and $j$ in $G(t)$, and
    updating by
    \[ x_i(t+1) = x_i(t) + \sum_{j \in N_i(t)} \epsilon_{ij}(t) \left( x_j(t) - x_i(t) \right). \]
    \item The load-balancing algorithm of \cite{NOOT07} involves
    updating by
    \[ x_i(t+1) = x_i(t) + \sum_{i \in N_i(t)} a_{ij}(t) \left( x_j(t) - x_i(t) \right),\] where $a_{ij}(t)$ is determined by the following rule: each agent selects exactly two neighbors, the neighbor with the largest value above its own and with the smallest value below its own. If $i,j$ have both selected each other, then
    $a_{ij}(t)=1/3$; else $a_{ij}(t)=0$. The intuition comes from load-balancing: agents think of $x_i(t)$
     as load to be equalized among their neighbors; they try to offload on their lightest neighbor and take from their heaviest neighbor.

%    \item Consensus propagation \cite{MVR06} involves updates
%\begin{eqnarray*} K_{ij}(t+1) & = &   \frac{1 + \sum_{ u \in N_i(t) - \{j\}} K_{ui}(t)}{1 + \frac{1}{\beta } (1 + %\sum_{ u \in N_i(t) - \{j\}} K_{ui}(t))}\\
%\mu_{ij}(t+1) & = & \frac{y_i(t) + \sum_{u \in N_i(t) - \{j\}} K_{ui}(t) \mu_{ui}(t)}{1 + \sum_{ u \in N_i(t) - %\{j\}} K_{ui}(t)} \\
%x_i(t+1) & = & \frac{y_i + \sum_{u \in N_i(t)} K_{ui}(t) \mu_{ui}(t)}{1 + \sum_{u \in N_i(t)} K_{ui}(t)}.
%\end{eqnarray*} Note that the algorithm is parametrized by the number $\beta$. Consensus propagation may be viewed
%as a special case of belief propagation on an appropriately defined problem. Note that in contrast to previous
%algorithms, consensus propagation introduces the extra variables $\mu_{ij}(t), K_{ij}(t)$.
\end{enumerate}

We remark that the above load-balancing algorithm is not a ``local
averaging algorithm'' according to our definition because $x_i(t+1)$
does not depend only on $x_i(t)$ and its neighbors; for example,
agents $i$ and $j$ may not match up because  $j$ has a neighbor $k$
with $x_k(t) > x_j(t)$. By contrast, the max-degree and Metropolis
algorithm are indeed ``local averaging algorithms.''

For each of the above algorithms, it is known that Eq.
(\ref{convergence}) holds provided the connectivity condition in Eq.
(\ref{connectivity}) holds.  A proof of this fact for the
load-balancing algorithm is implicit in \cite{NOOT07}, and for the
others it follows from the results of \cite{LW04, BHOT05}.

\subsection{\ao{Our contribution}}

Our goal is to study the worst-case convergence time over all graph
sequences. \ao{This convergence time may be arbitrarily bad since
one can insert arbitrarily many empty graphs into the sequence
$G(t)$ without violating Eq.\ (\ref{connectivity}). To avoid this
trivial situation, we require that there exist some integer $B$
such that the graphs}
\begin{equation} \label{strongerconnect} (\{1,\ldots,n\}, \cup_{i=kB}^{(k+1)B} E(k))
 \end{equation} are connected for every integer $k$.

Let $x(t)$ be the vector in $\Re^n$ whose $i$th component is $x_i(t)$.
We define the convergence time $T(n,\epsilon)$ of a local averaging algorithm as the \aa{time until
``sample variance'' } \[ V(x(t)) = \sum_{i=1}^n \left( x_i(t)-\frac{1}{n}
\sum_{j=1}^n x_j(0) \right)^2 \] \aa{permanently shrinks by a factor of $\epsilon$, i.e., $V(x(t)) \leq \epsilon V(x(0))$ for all $t \geq T(n, \epsilon)$, for all possible  $n$-node graph sequences satisfying
Eq.\ (\ref{strongerconnect}), and all initial vectors $x(0)$ for
which not all $x_i(0)$ are equal; $T(n,\epsilon)$ is defined to be the smallest number
with this property.} We are interested in how
$T(n,\epsilon)$ scales with $n$ and $\epsilon$.

Currently, the best available upper bound for the convergence time is
obtained with the load-balancing algorithm; in \cite{NOOT07} it was
proven that
\[ T(n,\epsilon) \leq C n^2 B \log \frac{1}{\epsilon},
\] for some absolute constant\footnote{By ``absolute constant'' we
mean that $C$ does not depend on the problem parameters $n, B,
\epsilon$.} $C$.We are primarily interested in whether its possible to improve the
scaling with $n$ to below $n^2$.  Are there  nonlinear update
functions $f_{i,G(t)}$ which speed up the convergence time?

Our main result is that the answer to this question is ``no" within
the class of local averaging algorithms. For such algorithms we
prove a general lower bound of the form \[ T(n,\epsilon) \geq c n^2
B \log \frac{1}{\epsilon}, \] for some absolute constant $c$.
Moreover, this lower bound holds even if we assume that the graph
sequence $G(t)$ is the same for all $t$; in fact, we prove it for
the case where $G(t)$ is a fixed ``line graph.''

\section{Formal statement and proof of main result}

We next state our main theorem. The theorem
begins by  specializing our definition of local averaging
algorithm to the case of a fixed line graph, and states a lower
bound on the convergence time in this setting.

We will use the notation $\1$ to denote the vector in $\R^n$ whose
entries are all ones, and $\0$ to denote the vector whose
entries are all $0$. The average of the initial values
$x_1(0),\ldots,x_n(0)$ will be denoted by $\bar{x}$.

\noindent \begin{theorem} \label{mainthm} Let $f_1$, $f_n$ be two differentiable functions from $\R^2$ to $\R$, and
let $f_2,f_3,\ldots,f_{n-1}$ be differentiable functions from $\R^3$ to $\R$. Consider
the dynamical system \begin{eqnarray} x_1(t+1) & = &
f_1(x_1(t),x_2(t)), \nonumber \\
x_i(t+1) & = & f_i(x_i(t),
x_{i-1}(t),x_{i+1}(t)), ~~ i = 2,\ldots,n-1, \nonumber \\
x_n(t+1) & = & f_n(x_{n-1}(t),x_n(t)). \label{dynsys} \end{eqnarray} Suppose that there exists a function $\tau(n,\epsilon)$ such that
 \[ \frac{ \|x(t)- \bar{x} \1 \|_2}{\|x(0)- \bar{x} \1\|_2}  < \epsilon, \] for all $n$ and $\epsilon$, all $t \geq \tau(n,\epsilon)$, and all initial conditions $x_1(0),\ldots,x_n(0)$ for
which not all $x_i(0)$ are equal. Then,
\begin{equation} \label{mainresult} \tau(n,\epsilon) \geq \frac{n^2}{30} \log \frac{1}{\epsilon},\end{equation} for all $\epsilon > 0$ and $n \geq 3$.
\end{theorem}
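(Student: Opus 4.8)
The plan is to reduce the nonlinear statement to a question about a single tridiagonal matrix, and then prove a spectral lower bound for that matrix. First I would show that every consensus vector $c\1$ is a fixed point of the one-step map $G:\R^n\to\R^n$ defined by (\ref{dynsys}). Indeed, the hypothesis forces $G^t(x)\to Px$ for all $x$, where $P=\frac{1}{n}\1\1^T$; since $G$ is continuous, applying $G$ to $G^t(x)\to Px$ gives $G(Px)=Px$, so $G(c\1)=c\1$ for every $c$. Writing $A:=DG(\0)$, the chain rule and the line structure make $A$ tridiagonal, and differentiating the identity $G(c\1)=c\1$ in $c$ yields $A\1=\1$ (each row of $A$ sums to $1$).

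Next I would feed perturbations of $\0$ into the hypothesis. Fix $v\perp\1$ and set $x(0)=\delta v$; then $\ox=0$ is constant in $t$, $G^t(\0)=\0$, and differentiability gives $\lim_{\delta\to0}x(t)/\delta = A^t v$. Letting $\delta\to0$ in the hypothesized ratio (whose numerator and denominator both scale like $\delta$) shows that
\[ \|A^t v\| \le \epsilon\,\|v\| \quad\text{for all } v\perp\1 \text{ and all } t\ge\tau(n,\epsilon). \]
Equivalently, since $A^t\1=\1$ forces $A^tP=P$, we obtain $\|A^t-P\|\le\epsilon$ in operator norm. The theorem now reduces to the purely linear claim: if $A$ is tridiagonal with $A\1=\1$ and $\|A^t-P\|\le\epsilon$, then $t\ge \frac{n^2}{30}\log\frac{1}{\epsilon}$.

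For the linear claim I would use a low-frequency test vector, e.g.\ $w_i=\cos\big(\tfrac{\pi(i-1/2)}{n}\big)$, which lies in $\1^\perp$ and is smooth on the scale of the whole line: $|w_{i+1}-w_i|=O(1/n)$ while $w_{i-1}+w_{i+1}-2w_i=O(1/n^2)$ pointwise. Because each row of $A$ sums to $1$, one step changes $w$ only through discrete differences, $(Aw)_i-w_i = A_{i,i-1}(w_{i-1}-w_i)+A_{i,i+1}(w_{i+1}-w_i)$; the symmetric (diffusive) part of this combination sees the second difference $O(1/n^2)$, while the antisymmetric (advective) part behaves like a translation and is essentially norm-preserving. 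The quantity I want is a per-step bound $\|Aw\|\ge(1-c/n^2)\|w\|$, which iterated gives $\|A^tw\|\ge(1-c/n^2)^t\|w\|$; comparing with $\|A^tw\|=\|(A^t-P)w\|\le\epsilon\|w\|$ forces $(1-c/n^2)^t\le\epsilon$, i.e.\ $t\ge\frac{n^2}{c}\log\frac{1}{\epsilon}$, and I would track constants to reach $1/30$.

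The main obstacle is that the off-diagonal entries of $A$ are not bounded a priori: a single expansive step with large coefficients could in principle crush the low-frequency mode and destroy the per-step inequality $\|Aw\|\ge(1-c/n^2)\|w\|$. Resolving this is the crux. My plan is to control the relevant (symmetric-part) coefficients using the contraction itself: large diffusive coefficients that contract $w$ must strongly expand some higher-frequency vector, and that vector must still be driven below $\epsilon$ by time $\tau$, which limits how large the coefficients acting on $w$ can effectively be. Concretely I would either (i) establish the per-step estimate only in an averaged sense along the trajectory $w, Aw,\ldots,A^{t}w$, absorbing sporadic large steps, or (ii) split $A$ into symmetric and antisymmetric parts and bound the cumulative Dirichlet-form contraction $\sum_k \langle A^kw,(I-A)A^kw\rangle$ by $O(t/n^2)\|w\|^2$ using the global bound $\|A^t-P\|\le\epsilon$. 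Turning the diffusion heuristic into a coefficient-robust inequality is where the real work lies.
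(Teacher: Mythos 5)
Your reduction to the linearization is essentially the paper's own first half: you show that $c\1$ is a fixed point of the update map, set $A=DG(\0)$, obtain tridiagonality and $A\1=\1$, and transfer the contraction hypothesis to the powers of $A$ via differentiability (the paper does this in its Lemmas 1--5, and along the way also extracts $\1^T A=\1^T$ from $A^t\to\frac{1}{n}\1\1^T$ --- a property you will need for the spectral step but never state, although it does follow from your $\|A^t-P\|\le\epsilon$).

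The genuine gap is in the linear claim, which is exactly where the difficulty of the theorem lives. First, your iteration scheme is invalid as written: a per-step bound $\|Aw\|\ge(1-c/n^2)\|w\|$ for one particular low-frequency test vector $w$ does not iterate to $\|A^tw\|\ge(1-c/n^2)^t\|w\|$, because $Aw$ is not a multiple of $w$ and the iterates $A^kw$ need not remain low-frequency; to iterate you need the estimate on an $A$-invariant set, i.e., in effect an eigenvector. Second, you correctly identify that unbounded and sign-indefinite off-diagonal entries defeat the naive $O(1/n^2)$ Dirichlet-form estimate, but neither of your proposed repairs (trajectory-averaged per-step bounds, cumulative Dirichlet forms) is carried out, and both still founder on the loss of low-frequency structure along the orbit. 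What is actually needed --- and what the paper supplies by citing Theorems 4.1 and 6.1 of \cite{OT09} --- is that any tridiagonal matrix with $A\1=\1$, $\1^TA=\1^T$, connected support, and largest eigenvalue modulus $1$ has a \emph{real eigenvalue} in $(1-6/n^2,1)$ whose eigenvector is orthogonal to $\1$; with an honest eigenvector the decay rate $\lambda^k$ is exact and the theorem follows in a few lines. That coefficient-robust quantitative spectral result (note that tridiagonality plus the two sum conditions forces $A$ to be symmetric, which is the natural starting point for proving it) is the crux of the whole theorem, and your proposal leaves it as an acknowledged heuristic.
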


\bigskip

\noindent {\bf Remark:} The dynamical system described in the
theorem statement is simply what a local averaging algorithm looks
like on a line graph. The functions $f_1, f_n$ are the update
functions at the left and right endpoints of the line (which have
only a single neighbor),  while the update functions $f_2, f_3,
\ldots, f_{n-1}$ are the ones used by the middle agents (which have
two neighbors). As a corollary, the convergence time of any local averaging algorithm must satisfy  the lower bound
$T(n,\epsilon) \geq (1/30) n^2 \log (1/\epsilon)$.

\bigskip

\noindent {\bf Remark:} \aa{Fix some $n \geq 3$. A corollary of our theorem is that} there are no
``local averaging algorithms'' which compute the average in finite time. \aa{ More precisely,
 there is no local averaging algorithm which, starting from initial conditions $x(0)$ in some
  ball around the origin, always results in $x(t) = {\bar x} \bf 1$ for all times $t$ larger than
 some $T$.}  We will sketch a proof of this after proving Theorem 1. By contrast, the existence of such algorithms in slightly
different models of agent interactions was demonstrated in
\cite{C06} and \cite{SH07}.

\subsection{Proof of Theorem \ref{mainthm}.}

We first briefly sketch the proof strategy. We will begin by pointing out that $\0$ must be an equilibrium of
 Eq. (\ref{dynsys}); then, we will argue that an upper bound  on the convergence time of Eq. (\ref{dynsys}) would imply a similar convergence time bound on the linearization of Eq. (\ref{dynsys}) around the equilibrium of $\0$.
This will allow us to apply a previous $\Omega(n^2)$ convergence time lower
bound for {\em linear} schemes, proved by the authors in \cite{OT09}.

Let $f$ (without a subscript) be the mapping from $\R^n$ to itself that
maps $x(t)$ to $x(t+1)$ according to Eq.\ (\ref{dynsys}).

\begin{lemma} $f(a\1)=a\1$, for any $a \in \R$. \label{fixedpoint} \end{lemma}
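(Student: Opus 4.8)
The plan is to derive the fixed-point property purely from the quantitative convergence hypothesis of the theorem together with the continuity of $f$, without invoking any structural form of the update functions. The immediate technical obstacle is that the hypothesis is stated only for initial conditions whose entries are \emph{not} all equal (so that the denominator $\|x(0)-\bar{x}\1\|_2$ is nonzero), and $a\1$ is precisely the excluded vector. I would therefore reach $a\1$ as a limit of admissible initial conditions and exploit continuity to pass to the limit.

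First I would fix $\epsilon = 1/2$ and set $T = \lceil \tau(n,1/2)\rceil$. Since each $f_i$ is differentiable, hence continuous, the one-step map $f$ is continuous, and so is every finite composition $f^{(t)} = f \circ \cdots \circ f$ ($t$ times). Next I would choose a sequence of admissible starting vectors approaching $a\1$, for instance $x^{(k)}(0) = a\1 + (1/k)\,e_1$, where $e_1$ is the first standard basis vector; these are not all equal, and their averages satisfy $\bar{x}^{(k)} = a + 1/(kn) \to a$ as $k \to \infty$.

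The core step is a squeezing argument applied at a fixed integer time $t \geq T$. The convergence hypothesis gives $\|x^{(k)}(t) - \bar{x}^{(k)}\1\|_2 < \tfrac12\,\|x^{(k)}(0) - \bar{x}^{(k)}\1\|_2$, and the right-hand side tends to $0$ as $k \to \infty$. Hence $x^{(k)}(t) - \bar{x}^{(k)}\1 \to \0$, and since $\bar{x}^{(k)}\1 \to a\1$ we conclude $x^{(k)}(t) \to a\1$. On the other hand, $x^{(k)}(t) = f^{(t)}\big(x^{(k)}(0)\big) \to f^{(t)}(a\1)$ by continuity of $f^{(t)}$. Comparing the two limits yields $f^{(t)}(a\1) = a\1$ for every integer $t \geq T$.

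Finally I would upgrade ``the $t$-fold iterate fixes $a\1$'' to ``$f$ fixes $a\1$'' by applying the previous conclusion at two consecutive times. Taking $t = T$ and $t = T+1$ gives both $f^{(T)}(a\1) = a\1$ and $f^{(T+1)}(a\1) = a\1$; but $f^{(T+1)}(a\1) = f\big(f^{(T)}(a\1)\big) = f(a\1)$, so $f(a\1) = a\1$, as required. The hard part is conceptual rather than computational: mere convergence of the trajectory to $a\1$ only forces $a\1$ to be a periodic point of $f$, and it is the use of two consecutive iterates that collapses the period to one and produces a genuine fixed point. The limiting/continuity device handles the minor nuisance that $a\1$ itself lies outside the range of the hypothesis.
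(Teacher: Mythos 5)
Your proof is correct, and it takes a more roundabout (but also more scrupulous) route than the paper's. The paper's proof is two lines: start the iteration at $x(0)=a\1$, note that the trajectory must converge to the average $a\1$, and use continuity of $f$ via $\lim_t x(t+1)=f(\lim_t x(t))$ to conclude that the limit is a fixed point. In doing so the paper implicitly applies the convergence property to the constant initial condition $a\1$ --- which, as you correctly observe, is literally excluded from the quantitative hypothesis of Theorem 1 (the ratio there has $\|x(0)-\bar x\1\|_2$ in the denominator). Your perturbation device ($x^{(k)}(0)=a\1+(1/k)e_1$, squeeze at a fixed time $t\ge T$, pass to the limit by continuity of $f^{(t)}$) repairs exactly this point, and your use of the two consecutive iterates $f^{(T)}$ and $f^{(T+1)}$ correctly collapses ``$a\1$ is fixed by every sufficiently high iterate'' to ``$a\1$ is fixed by $f$''; the paper's $\lim_t x(t)=\lim_t x(t+1)$ step plays the same role in its argument. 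What the paper's approach buys is brevity, at the cost of leaning on the informal definition of a local averaging algorithm (where convergence is required for \emph{all} initial conditions); what yours buys is a proof that follows strictly from the hypotheses as stated in Theorem 1. Both are valid; yours is the more careful reading of the formal statement.
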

\begin{proof} Suppose that $x(0)=a \1$. Then, the initial average is $a$, so that
 \[ a \1 = \lim_t x(t) = \lim_t x(t+1) = \lim_t f(x(t)).\] We use the continuity of $f$ to get
 \[ a \1 = f(\lim_t x(t)) = f(a \1). \]
\end{proof}

%\begin{lemma} For any $\epsilon > 0$,
%we can find $\delta > 0$ such that if \[ \|x(0) - \bar{x} \1\|_2 \leq \delta, \] then
%\begin{equation} \label{stability} \|x(t)-\bar{x} \1\|_2 \leq \epsilon \end{equation} for all $t$.
%\end{lemma}
%
%\begin{proof} Suppose that $\|x(0)-\bar{x} {\bf 1}\|_2 \leq \delta$.  By the definition of $T(n,\cdot)$: \[ %\|x(t)-\bar{x} \1\|_2 \leq \delta,~~ \mbox{ for all } t \geq T(n,1). \] So if we pick $\delta \leq \epsilon$, Eq. %(\ref{stability}) will be satisfied
%for $ t \geq T(n,1)$. Next, we need to make sure Eq.
%(\ref{stability}) is satisfied for $t=1,2,\ldots,T(n,1)-1$. Since
%$f^k$ is continuous for any  $k$, we can pick $r(k)$ such that
%any\footnote{\ao{The notation $B(a, d)$ refers to the set $\{ x ~|~
%\|x-a\|_2 \leq d\}$.}} $x \in B(\0, r(k))$ implies $f^k(x) \in
%B(\0,\epsilon)$. Pick $\delta = \min \{ r(1), r(2), \ldots,
%r(T(n,1)-1), \epsilon)\}$.
%\end{proof}

For $i,j=1,\ldots,n$, we define  $a_{ij} = \frac{\partial
f_i(0)}{\partial x_j}$,
%, and similarly let $a_{li} = \frac{\partial
%f_l(0)}{\partial x_i}, a_{ri} = \frac{\partial f_r(0)}{\partial
%x_i}$.
and the matrix
\[ A=  f'({\bf 0}) = \left(
                         \begin{array}{cccccc}
                           a_{11} & a_{12} & 0 & 0 & \cdots & 0 \\
                           a_{21} & a_{22} & a_{23} & 0 & \cdots & 0 \\
                           0 & a_{32} & a_{33} & a_{34} & \cdots & 0 \\
                           \vdots & \vdots & \vdots & \vdots & \vdots & \vdots \\
                           0 & \cdots & 0 & 0 & a_{n,n-1} & a_{nn} \\
                         \end{array}
                       \right). \]

\begin{lemma} For any integer $k \geq 1$, \[ \lim_{x \rightarrow \0} \frac{\|f^{k}(x) - A^k x\|_2}{\|x\|_2} = 0,  \]
\label{compositionapprox} \aa{where $f^{k}$ refers to the $k$-fold composition of $f$ with itself.} \end{lemma} \begin{proof} The fact that
$f(\0)=\0$ implies by the chain rule that the derivative of $f^k$ at
$x=\0$ is $A^k$. The above equation is a restatement of this fact.
\end{proof}
\begin{lemma} Suppose that $x^T \1=0$. Then, \[ \lim_{m \rightarrow \infty} A^m x = {\bf 0}.\] \label{conv}
\end{lemma}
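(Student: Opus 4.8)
The plan is to show that $A$ leaves the hyperplane $H = \{x : x^T\1 = 0\}$ invariant and acts as a strict contraction on it; since any $x$ with $x^T\1=0$ lies in $H$, this forces $A^m x \to \0$. Two ingredients are needed: first, that $A(H)\subseteq H$, and second, that some fixed power of $A$ contracts $H$ by a constant factor.

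For the invariance, I would first argue that the nonlinear map $f$ preserves the sum exactly, i.e. $\1^T f(y) = \1^T y$ for every $y \in \R^n$. This follows from the convergence hypothesis together with a time-shift argument: running the system from $y$ gives $\lim_t f^t(y) = \frac1n (\1^T y)\1$, while running it from $f(y)$ gives $\lim_t f^t(f(y)) = \frac1n(\1^T f(y))\1$; these two sequences share a common tail, so their limits coincide and $\1^T f(y) = \1^T y$. Differentiating the identity $\1^T f(y) - \1^T y \equiv 0$ at $y = \0$ and using $f'(\0) = A$ yields $\1^T A = \1^T$. Hence $\1^T(Ax) = \1^T x$ for all $x$, so $A$ maps $H$ into $H$.

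For the contraction, fix $\epsilon = 1/2$ and choose an integer $k \ge \tau(n, 1/2)$. For any nonzero $x \in H$ the average of its entries is $0$, so the theorem's hypothesis gives $\|f^k(x)\|_2 < \frac12 \|x\|_2$. To transfer this to the linear map, evaluate along the ray $\delta x$ with $\delta \downarrow 0$ (note $\delta x \in H$): we get $\|f^k(\delta x)\|_2 < \frac\delta2\|x\|_2$, while Lemma~\ref{compositionapprox} gives $\|f^k(\delta x) - \delta A^k x\|_2 = o(\delta)$. Combining, dividing by $\delta$, and letting $\delta \to 0$ yields $\|A^k x\|_2 \le \frac12 \|x\|_2$ for every $x \in H$. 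Finally, since $A^k(H) \subseteq H$, iterating gives $\|A^{mk}x\|_2 \le 2^{-m}\|x\|_2$, and writing a general exponent as $mk + r$ with $0 \le r < k$ and bounding $\max_{0 \le r < k}\|A^r\|$ over the finitely many $r$ shows $A^M x \to \0$ as $M \to \infty$.

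I expect the main obstacle to be the second ingredient: passing from the \emph{nonlinear} contraction supplied by the convergence hypothesis to a genuine contraction of the \emph{linear} operator $A^k$. The homogeneity trick of evaluating along $\delta x$ and sending $\delta \to 0$ is what makes Lemma~\ref{compositionapprox} bite, since that lemma controls $f^k - A^k$ only to first order near the origin. The sum-preservation step is conceptually the other delicate point, because $A\1 = \1$ (from Lemma~\ref{fixedpoint}) gives only that $\1$ is a \emph{right} eigenvector, whereas invariance of $H$ requires the \emph{left}-eigenvector statement $\1^T A = \1^T$.
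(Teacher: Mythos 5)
Your proof is correct and follows the same core strategy as the paper's: apply the convergence hypothesis with $\epsilon=1/2$ to get a nonlinear contraction over $k=\tau(n,1/2)$ steps, transfer it to the linear map $A^k$ via Lemma~\ref{compositionapprox}, and then iterate the resulting linear contraction. Two differences are worth noting. First, where you rescale to $\delta x$ and send $\delta\to 0$ to obtain the clean bound $\|A^kx\|_2\le \frac12\|x\|_2$ on the whole hyperplane $H$, the paper instead fixes a ball $\mathcal{B}$ on which the approximation error is uniformly at most $1/4$ and settles for the factor $3/4$; both work, and yours is marginally cleaner since any bound on a linear map extends to all of $H$ by homogeneity anyway. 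Second, and more substantively, you explicitly prove $\1^T A=\1^T$ (via sum-preservation of $f$ under the time-shift argument, then differentiation at $\0$) so that $H$ is $A$-invariant and the contraction can legitimately be iterated. The paper's proof silently reapplies the convergence hypothesis to $A^kx$, which requires $(A^kx)^T\1=0$; the identity $\1^T A=\1^T$ is only established later (part 3 of Lemma~\ref{aproperties}), and by a route that passes through Lemma~\ref{convlemma} and hence through the present lemma. Your direct derivation of $\1^T A=\1^T$ from the dynamics closes that small circularity and makes the recursion airtight, so on this point your write-up is actually more careful than the paper's.
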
 \begin{proof} \ao{Let ${\cal B}$ be a ball around the origin such that for all $x \in {\cal B}$, with $x\neq {\bf 0}$, we have}
\[ \frac{\|f^k ( x )- A^k x \|_2}{\|x\|_2} \leq \frac{1}{4}, ~~\mbox{ for  } k=\tau(n,1/2). \] \ao{Such a ball can be found due to  Lemma \ref{compositionapprox}. Since we can scale $x$ without affecting the assumptions or conclusions of the lemma we are trying to prove, we can assume that $x \in {\cal B}$.}
It follows that that for $k=\tau(n,1/2)$, we have
\begin{eqnarray*} \frac{\|A^{k} x\|_2}{\|x\|_2} & = &   \frac{\|A^{k} x - f^{k}(x) + f^{k} (x)\|_2}{\|x\|_2} \\
 & \leq &  \frac{1}{4} + \frac{\|f^{k} (x)\|_2}{\|x\|_2} \\
  & \leq &  \frac{1}{4} + \frac{1}{2} \\
  &  \leq & \frac{3}{4}.
   \end{eqnarray*}  \ao{Since this inequality implies that $A^{k} x \in {\cal B}$, we can apply} the same argumet argument recursively to get
\[ \lim_{m \rightarrow \infty} (A^{k})^m x = \0,\]  which implies the conclusion of the lemma.
\end{proof}
\bigskip
\begin{lemma} A\1 = \1. \label{righteigen} \end{lemma}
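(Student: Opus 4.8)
The plan is to differentiate the fixed-point identity from Lemma \ref{fixedpoint} along the diagonal line $\{a\1 : a \in \R\}$ and read off the claim at $a=0$. The essential observation is that Lemma \ref{fixedpoint} tells us $f(a\1)=a\1$ for \emph{every} $a$, so $f$ acts as the identity on this entire line, not just at the single point $\0$. This is exactly the kind of relation one can differentiate in the parameter $a$ to extract information about the Jacobian $A = f'(\0)$.

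Concretely, I would introduce the curve $\gamma(a) = a\1$ and consider the composition $(f \circ \gamma)(a) = f(a\1)$, which equals $a\1$ by Lemma \ref{fixedpoint}. Since each $f_i$ is differentiable, $f$ is differentiable, and the chain rule applies:
\[ \frac{d}{da}\, f(a\1) = f'(a\1)\,\1. \]
Evaluating at $a=0$ gives $f'(\0)\,\1 = A\1$ on the left-hand side, while the right-hand side $\frac{d}{da}(a\1) = \1$. Equating the two yields $A\1 = \1$, as desired.

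If one prefers to avoid differentiating a vector-valued curve, the same computation can be carried out row by row, which also makes the structure transparent. The $i$-th entry of $A\1$ is the sum of the entries in row $i$ of $A$, i.e.\ the sum of the partial derivatives of $f_i$ at $\0$. For instance, from $f_1(a,a)=a$ one differentiates in $a$ to get $a_{11}+a_{12}=1$; the analogous identities $f_i(a,a,a)=a$ for the middle indices and $f_n(a,a)=a$ for the last give that each row of $A$ sums to $1$. This is precisely the statement $A\1=\1$.

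I do not anticipate a genuine obstacle here: the only subtlety is recognizing that Lemma \ref{fixedpoint} supplies a one-parameter family of fixed points rather than an isolated one, so that differentiation in $a$ is legitimate and meaningful. Once that is in hand, the result is an immediate application of the chain rule using the differentiability of $f$ that we have assumed throughout.
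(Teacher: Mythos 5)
Your proposal is correct and is essentially the paper's own argument: the paper computes $A\1 = \lim_{h\to 0}\bigl(f(h\1)-f(\0)\bigr)/h = \lim_{h\to 0} h\1/h = \1$ using Lemma \ref{fixedpoint}, which is exactly your chain-rule differentiation of $a \mapsto f(a\1)$ at $a=0$. The row-by-row variant you mention is the same computation written componentwise.
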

\begin{proof} We have \[ A \1 = \lim_{h \rightarrow 0} \frac{f(\0 + h  \1)-f(\0)}{h} = \lim_{h \rightarrow 0} \frac{ h \1}{h} = \1,\] where we used Lemma \ref{fixedpoint}. \end{proof}

\begin{lemma} For every vector $x \in \R^n$, \[ \lim_{k \rightarrow \infty} A^k x = {\bar x} \1 ,\]
where ${\bar x}=(\sum_{i=1}^n x_i)/n$.
\label{convlemma}
\end{lemma}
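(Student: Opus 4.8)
The plan is to decompose an arbitrary vector $x \in \R^n$ into its component along $\1$ and a component orthogonal to $\1$, and then apply the two preceding convergence facts to each piece separately, using the linearity of multiplication by $A^k$. Concretely, I would write $x = \bar{x}\1 + (x - \bar{x}\1)$, so that $A^k x = \bar{x}\, A^k \1 + A^k(x - \bar{x}\1)$.

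The crucial observation, which I would establish first, is that the deviation vector $y = x - \bar{x}\1$ is orthogonal to $\1$. Indeed, $y^T \1 = x^T \1 - \bar{x}\,\1^T\1 = \sum_{i=1}^n x_i - \bar{x}\, n = 0$, since by definition $\bar{x} = (\sum_{i=1}^n x_i)/n$. This is exactly the hypothesis needed to invoke Lemma \ref{conv}, and identifying it is the only step requiring any care.

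With that in hand, the two terms are handled by the two lemmas. For the first term, Lemma \ref{righteigen} gives $A\1 = \1$, so $A^k \1 = \1$ for every $k$ and hence $\bar{x}\, A^k \1 = \bar{x}\1$ for all $k$. For the second term, Lemma \ref{conv} applied to $y$ (which satisfies $y^T\1 = 0$) gives $\lim_{k \to \infty} A^k y = \0$. Adding the two contributions yields $\lim_{k \to \infty} A^k x = \bar{x}\1 + \0 = \bar{x}\1$, which is the desired conclusion.

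Since the argument reduces entirely to the orthogonality check together with Lemmas \ref{conv} and \ref{righteigen}, I do not expect any substantive obstacle; the whole content lies in recognizing the right splitting of $x$ and verifying that its deviation component sums to zero so that Lemma \ref{conv} genuinely applies.
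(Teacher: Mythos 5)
Your proposal is correct and follows essentially the same route as the paper: decompose $x = \bar{x}\1 + y$ with $y^T\1 = 0$, then apply Lemma \ref{righteigen} to the $\1$-component and Lemma \ref{conv} to the orthogonal component. The only difference is that you spell out the orthogonality check $y^T\1 = 0$ explicitly, which the paper leaves implicit.
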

\begin{proof} Every vector $x$ can be written as
\[ x = %\frac{\sum_{i=1}^n x_i}{n}
{\bar x} \1  + y,\] where $y^T \1 = 0$. Thus,
\[ \lim_{k \rightarrow \infty} A^k x = \lim_{k \rightarrow \infty} A^k \left( {\bar x} \1  + y \right) =
{\bar x} \1 + \lim_{k \rightarrow \infty} A^k y = {\bar x} \1, \] where we used
Lemmas \ref{conv} and \ref{righteigen}.
\end{proof}

\bigskip

\begin{lemma} \label{aproperties} The matrix $A$ has the following properties:
\begin{enumerate} \item $a_{ij}=0$ whenever $|i-j| >1$.
\item The graph $G = (\{1,\ldots,n\},E)$, with $E = \{ (i,j) ~|~ a_{ij} \neq 0 \}$, is strongly connected.
\item $A \1 = \1$ and $\1^T A = \1$.
%\item $A$ is symmetric.
\item An eigenvalue of $A$ of largest modulus has modulus $1$.
\item $A$ has an eigenvector $v$, with real eigenvalue $\lambda \in (1-\frac{6}{n^2},1)$, such that $v^T {\bf 1}=0$.
\end{enumerate}
\end{lemma}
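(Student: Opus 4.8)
The plan is to read off properties 1--4 from the structure of (\ref{dynsys}) together with the convergence $A^k x \to \bar{x}\,\1$ established in Lemma \ref{convlemma}, and then to leverage these in the quantitative statement 5. Property 1 is immediate: in (\ref{dynsys}) the $i$th coordinate of $f$ depends only on $x_{i-1},x_i,x_{i+1}$, so $a_{ij}=\partial f_i(\0)/\partial x_j=0$ whenever $|i-j|>1$, i.e.\ $A$ is tridiagonal. For property 3, the identity $A\1=\1$ is exactly Lemma \ref{righteigen}. Lemma \ref{convlemma} gives $A^k\to P:=\frac{1}{n}\1\1^T$ entrywise; passing to the limit in $A^{k+1}=A^k A$ yields $PA=P$, and since $P=\frac{1}{n}\1\1^T$ this forces $\1^T A=\1^T$, the remaining half of property 3.

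Properties 2 and 4 also follow from $A^k\to P$. Each entry of $P$ equals $1/n\neq 0$, so for every ordered pair $(i,j)$ there is a $k$ with $(A^k)_{ij}\neq 0$; since $(A^k)_{ij}$ is a sum of products $a_{i i_1}a_{i_1 i_2}\cdots a_{i_{k-1} j}$, a nonzero value forces a directed path $i\to j$ in $G$, which is property 2. For property 4, $A\1=\1$ shows $1$ is an eigenvalue, so the largest modulus is at least $1$; and if some eigenvalue $\mu$ had $|\mu|>1$ with eigenvector $w$, then $\|A^k w\|=|\mu|^k\|w\|\to\infty$, contradicting $A^k w\to Pw$. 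Hence the largest modulus is exactly $1$.

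The crux is property 5, and the observation that unlocks it is that properties 1 and 3 force $A$ to be \emph{symmetric}. Indeed, subtracting the $i$th column-sum equation ($\1^TA=\1^T$) from the $i$th row-sum equation ($A\1=\1$) shows that the asymmetries $\delta_i:=a_{i,i+1}-a_{i+1,i}$ satisfy $\delta_i=\delta_{i-1}$, while the boundary equations at node $1$ give $a_{12}=a_{21}$, i.e.\ $\delta_1=0$; hence all $\delta_i$ vanish and $A=A^T$. Therefore $A$ has real eigenvalues and an orthonormal eigenbasis. Because the limit $P=\frac{1}{n}\1\1^T$ has rank one, the powers $A^m=\sum_k \lambda_k^m u_k u_k^T$ can converge only if the eigenvalue $1$ is simple (with eigenvector $\1$) and every other eigenvalue lies in $(-1,1)$; combined with property 4 this makes $1$ the largest eigenvalue. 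Consequently every other eigenvector is orthogonal to $\1$, and the second-largest eigenvalue $\lambda_2$ already satisfies $\lambda_2<1$.

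It remains to prove $\lambda_2>1-6/n^2$, which I regard as the main obstacle. By symmetry and Courant--Fischer, $\lambda_2=\max_{v\perp\1} v^T A v/v^T v$, so it suffices to produce a single test vector $v\perp\1$ whose Rayleigh quotient exceeds $1-6/n^2$; equivalently, writing $L=I-A$ (for which $v^T L v=\sum_{i=1}^{n-1}a_{i,i+1}(v_{i+1}-v_i)^2$ by symmetry and unit row sums), it suffices to make $v^T L v/v^T v$ smaller than $6/n^2$. The natural candidate is the centered linear vector $v_i=i-(n+1)/2$, for which $\|v\|^2=n(n^2-1)/12$ and $v^T L v=\sum_{i=1}^{n-1}a_{i,i+1}$; the constraint $\lambda_{\min}(A)>-1$, tested against the alternating vector $((-1)^i)_i$, bounds $\sum_i a_{i,i+1}$ and makes this quotient $O(1/n^2)$. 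The delicate point is that the linear vector is a good witness only when the edge weights are spread out; when some edge is weak (so the line nearly splits into two clusters) one must instead use a cut vector localized at that edge, or more uniformly an ``electrical potential'' test vector adapted to the weights $a_{i,i+1}$. Verifying that one of these choices always drives the quotient below $6/n^2$, using only $\lambda_{\min}(A)>-1$, is the heart of the argument and the step I expect to require the most care.
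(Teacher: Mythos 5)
Your treatment of parts 1--4 is correct and matches the paper's in substance: part 1 is read off from the structure of $f$; the second half of part 3 comes from passing to the limit in $A^{k+1}=A^kA$ (the paper does the same computation column by column with the unit vectors $e_i$); part 2 follows from every entry of $A^k$ being eventually nonzero (the paper argues the contrapositive with a cut set, but it is the same idea); and part 4 is identical. Your observation that parts 1 and 3 force $A$ to be symmetric is also correct, and it is exactly the alternative route the paper sketches in the remark following Lemma \ref{aproperties}. The paper's own proof of part 5, by contrast, simply invokes Theorems 4.1 and 6.1 of \cite{OT09} (applied with $\pi=\1/n$ and $C=1$), with the symmetric-matrix route via \cite{BDSX06} mentioned only as an alternative.

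The gap is in part 5, and you flag it yourself: the quantitative step is not carried out. Two concrete points. First, your worry about weak edges is a red herring. With $L=I-A$ and the centered linear test vector $v_i=i-(n+1)/2$ one gets $1-\lambda_2 \le v^TLv/\|v\|_2^2 = 12S/(n(n^2-1))$ where $S=\sum_i a_{i,i+1}$, and the alternating-vector bound $4S = w^TLw \le (1-\lambda_{\min}(A))\,n < 2n$ controls $S$ no matter how the weights are distributed; a weak edge only makes $S$ smaller and the linear witness better, so no cut vector or electrical potential is needed. (You do need $\lambda_{\min}(A)>-1$ strictly, which holds because an eigenvalue $-1$ of the symmetric matrix $A$ would prevent $A^k$ from converging to $\frac{1}{n}\1\1^T$.) Second, even granting all of this, the computation delivers $1-\lambda_2 < 6/(n^2-1)$ rather than $6/n^2$; since $6/(n^2-1)>6/n^2$, this proves the lemma only with a slightly worse constant. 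That is harmless for Theorem \ref{mainthm} after adjusting the $1/30$, but it does not establish part 5 as literally stated. So as written the proposal does not close part 5: it needs either a sharper test vector or a sharper bound on $S$, or else an appeal to \cite{OT09} or \cite{BDSX06} as the paper does.
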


\begin{proof} \begin{enumerate} \item True because of the definitions of $f$ and $A$.
\item Suppose not. Then, there is a nonempty set $S \subset \{1,\ldots,n\}$ with the property that
$a_{ij}=0$ whenever $i\in S$ and $j\in S^c$. Consider the vector $x$ with
$x_i=0$ for $i \in S$, and $x_j=1$ for $j \in S^c$. Clearly, $(1/n) \sum_i x_i > 0$, but
$(A^k x)_i=0$ for $i \in S$. This contradicts Lemma \ref{convlemma}.
\item The first equality was already proven in Lemma \ref{righteigen}. For the second, let $b=\1^T A$. Consider the
vector \begin{equation} z =  \lim_{k \rightarrow \infty} A^k e_i, \end{equation} where $e_i$ is the $i$th unit
 vector. By Lemma \ref{convlemma}, \[
z = \frac{\1^T e_i}{n} \1 = \frac{1}{n} \1.\] On the
other hand, \[ \lim_{k \rightarrow \infty} A^k e_i = \lim_{k \rightarrow \infty} A^{k+1} e_i = \lim_{k \rightarrow \infty} A^k (A e_i).\] Applying Lemma \ref{convlemma} again, we get \[
z = \frac{\1^T \cdot (A e_i)}{n} \1 = \frac{b_i}{n} \1, \] where $b_i$ is the $i$th component of $b$. We conclude
that $b_i=1$; since no assumption was made on $i$, this implies that $b=\1$, which is what we needed to show.
\item We already know that $ A \1 = \1$, so that an eigenvalue with modulus $1$ exists.
Now suppose there is an eigenvalue with larger modulus, that is, there is some vector $x \in \mathbb{C}^n$ such that
 $Ax = \lambda x$ and $|\lambda| > 1$. Then $\lim_k \|A^k x\|_2 = \infty$. By writing $x = x_{\rm real} + i x_{\rm imaginary}$, we immediately have that $A^k x = A^k x_{\rm real} + i A^k x_{\rm imaginary}$. But by Lemma \ref{convlemma} both $A^k x_{\rm real}$ and $A^k x_{\rm imaginary}$ approach some finite multiple of $\1$ as $k \rightarrow \infty$, so $\|A^k x\|_2$ is bounded above. This is a contradiction.
\item \ao{The following fact  \aa{is a combination of Theorems 4.1 and 6.1 in \cite{OT09}}: Consider an $n \times n$ matrix $A$ such that $a_{ij}=0$ whenever $|i-j|>1$, and such that the graph with edge set $\{ (i,j) ~|~ a_{ij} \neq 0 \}$ is connected. Let $\lambda_1, \lambda_2, \ldots$ be its eigenvalues in order of decreasing modulus. Suppose that $\lambda_1=1$, $A \1 = \1$, and $\pi^T A = \pi^T$, for some vector $\pi$ satisfying $\sum_i \pi_i = 1$, and $\pi_i \geq 1/(Cn)$ for some positive $C$ and for all $i$. Then, $A$ has a \aa{real} eigenvalue in\footnote{\ao{The reference
\cite{OT09} proves that an eigenvalue lies in $(1-c_1 C/n^2,1)$ for some absolute constant $c_1$. By tracing through the proof, we find that we can take $c_1=6$.}} $(1-6C/n^2,1)$.}
Furthermore,  the corresponding eigenvector is orthogonal to ${\bf 1}$, since right-eigenvectors of a matrix are
orthogonal to left-eigenvectors with different eigenvalues.

\ao{By parts 1-4, all the assumptions of the result from \cite{OT09} are satisfied with $\pi=\1/n$ and $C=1$, thus completing the proof
 of the lemma.}
%, this follows by parts 1-4.
\end{enumerate}
\end{proof}

\bigskip

\noindent \ao{ {\bf Remark:}  An alternative proof of part 5 is possible.
One can argue that parts 1 and 3 force $A$ to be symmetric, and that Lemma \ref{convlemma} implies that
the elements $a_{ij}$ must be nonnegative. Once these two facts are established, the results of \cite{BDSX06} will then imply
 an eigenvalue has to lie in $(1-c/n^2,1)$,
 for a certain absolute constant $c$.
  }

\bigskip

\noindent {\bf Proof of Theorem \ref{mainthm}}:  Let $v$ be an eigenvector of $A$ with the properties in  part 5 of Lemma \ref{aproperties}. Fix a positive integer $k$. Let $\epsilon>0$ and pick $x \neq \0$ to be a small enough multiple of $v$ so that \[ \frac{\|f^k(x)-A^k(x)\|_2}{\|x\|_2} \leq \epsilon. \] This is possible by Lemma \ref{compositionapprox}.
Then, we have \[ \frac{\|f^k(x)\|_2}{\|x\|_2} \geq \frac{\|A^k x\|_2}{\|x\|_2} - \epsilon \geq  \left( 1-\frac{6}{n^2} \right) ^k - \epsilon. \] Using the orthogonality property $x^T {\bf 1} =0$, we have ${\bar x}=0$. Since we placed no restriction on $\epsilon$, this implies that
\[ \inf_{x \neq 0} \frac{\|f^k(x)-{\bar x}{\bf 1}\|_2}{\|x -{\bar x}{\bf 1}\|_2}=
\inf_{x \neq 0} \frac{\|f^k(x)\|_2}{\|x\|_2} \geq \left( 1 - \frac{6}{n^2}\right)^k \] Plugging $k=\tau(n,\epsilon)$ into this equation, we see that
\[ \left(1- \frac{6}{n^2}\right)^{\tau(n,\epsilon)} \leq \epsilon.\] Since $n \geq 3$, we have $1-6/n^2 \in (0,1)$, and
\[ \tau(n,\epsilon) \geq \frac{1}{\log (1-6/n^2)} \log \epsilon.\] Now using the bound $\log(1-\alpha) \geq 5(\alpha-1)$ for $\alpha \in [0,2/3)$, we get
\[ \tau(n,\epsilon) \geq \frac{n^2}{30} \log \frac{1}{\epsilon}.\] \noindent \qed
\bigskip

\noindent {\bf Remark:} \ao{We now sketch the proof of
 the claim we made earlier that a local averaging algorithm cannot average in finite time. \aa{Fix $n \geq 3$.} Suppose that for any $x(0)$ in some ball ${\cal B}$ around the origin, a local averaging algorithm results in $x(t) = {\bar x} \bf 1$ for all $t \geq T$.}

\ao{The proof of Theorem 1 shows that given any $k, \epsilon >0$, one can pick a vector $v(\epsilon)$ so that if $x(0)=v(\epsilon)$ then \aa{ $V(x(k))/V(x(0)) \geq (1-6/n^2)^k - \epsilon$. Moreover, the vectors $v(\epsilon)$ can be chosen
to be arbitrarily small. One simply picks $k = T$ and $\epsilon < (1-6/n^2)^k$ to get that $x(T)$ is not a multiple of $\1$; and furthermore, picking $v(\epsilon)$ small enough in norm to be in ${\cal B}$ results in a contradiction. }

\bigskip

\noindent {\bf Remark:} Theorem \ref{mainthm} gives a lower bound on how long we must wait for the $2$-norm $\|x(t)-\bar{x} \1\|_2$ to
shrink by a factor of $\epsilon$. What if we replace the $2$-norm with other norms, for example with the $\infty$-norm? Since
 ${\cal B}_{\infty}(\0, r/\sqrt{n}) \subset {\cal B}_{2}(\0,r) \subset {\cal B}_{\infty}(\0, r)$, it follows that if the $\infty$-norm shrinks by a factor of $\epsilon$, then the $2$-norm must shrink by at least $\sqrt{n} \epsilon$. Since $\epsilon$ only enters the lower bound of Theorem \ref{mainthm} logarithmically, the  answer only changes by a factor of $\log n$ in passing to the $\infty$-norm. A similar argument shows that, modulo some logarithmic factors, it makes no difference which $p$-norm is used.

\section{Conclusions}

We have proved a lower bound on the convergence time of local averaging algorithms which scales
 quadratically in the number of agents. This lower bound holds even if all the communication graphs are equal to a
 fixed line graph.  Our work points to a number of open questions.

\begin{enumerate} \item Is it possible to loosen the definition of local averaging algorithms to encompass a wider class of
algorithms? In particular, is it possible to weaken the requirement that each $f_{i,G(t)}$ be smooth, perhaps only
to the requirement that it be piecewise-smooth or continuous, and
still obtain a $\Omega(n^2)$ lower bound?
%\item The line graph is quite sparse. How do the results change if we optimistically assume that we will have a %fixed, well-connected graph at each step?
%     At present, it is unclear whether our theorem even extends to
%    the case of a fixed ``circle graph.''
\item Does the worst-case convergence time change if we introduce some memory and allow
$x_i(t+1)$ to depend on the last $k$ sets of messages received by
agent $i$? Alternatively, there is the broader question of how much is
there to be gained if every agent is allowed to keep track of extra
variables. Some positive results in this direction were obtained
in \cite{JSS09}.
\item What if each node maintains a small
number of update functions, and is allowed to choose which of them to apply? Our lower
bound does not apply to such schemes, so it is an open question whether its possible
to design practical algorithms along these lines with worst-case convergence 
time scaling better than $n^2$. 
\end{enumerate}

\end{document}